\newtheorem{thm}{Theorem}[section]
\newtheorem{cor}[thm]{Corollary}
\newtheorem{prop}[thm]{Proposition}
\newtheorem{lem}[thm]{Lemma}
\theoremstyle{definition}
\newtheorem{exa}[thm]{Example}
\numberwithin{equation}{section}
\def\eq#1{{\rm(\ref{#1})}}
\def\Eq#1#2{\ifthenelse{\equal{#1}{*}}
  {\begin{equation*}\begin{aligned}[]#2\end{aligned}\end{equation*}}
  {\begin{equation}\begin{aligned}[]\label{#1}#2\end{aligned}\end{equation}}}
\newcommand\case[1]{\noindent{\sc Case #1.}}
\def\G{\mathscr{G}}
\def\calL{\mathcal{L}}
\newcommand\R{\mathbb{R}}
\newcommand\N{\mathbb{N}}
\newcommand{\abs}[1]{\left| #1 \right| }
\DeclareMathOperator{\ord}{ord}
\DeclareMathOperator{\uo}{uord}
\DeclareMathOperator{\lo}{lord}
\DeclareMathOperator{\sign}{sign}
\newcommand{\dotvec}[3][SKIPPED]{
\ifthenelse{\equal{#1}{SKIPPED}}
  {#2,\dots,#3}
  {\underbrace{#2,\dots,#3}_{#1\text{ entries}}}
}
\newcommand{\Hc}[2][SKIPPED]{
\ifthenelse{\equal{#1}{SKIPPED}}
  {
    \ifthenelse{\equal{#2}{}}
      {\mathscr{H}}
      {\mathscr{H}(#2)}
  }
  {
    \ifthenelse{\equal{#2}{}}
      {\mathscr{H}_{#1}}
      {\mathscr{H}_{#1}(#2)}
  }
}
\newcommand{\Est}[2][SKIPPED]{
\ifthenelse{\equal{#1}{SKIPPED}}
  {
    \ifthenelse{\equal{#2}{}}
      {\mathscr{C}}
      {\mathscr{C}(#2)}
  }
  {
    \ifthenelse{\equal{#2}{}}
      {\mathscr{C}_{#1}}
      {\mathscr{C}_{#1}(#2)}
  }
}
\newcommand\PG{\mathcal{PG}}
\newcommand\GPG{\mathcal{GPG}}
\newcommand\HS{\mathcal{HS}}
\begin{document}
\begin{abstract}
    We generalize the result of (Witkowski,~2014) which binds orders of homogeneous, symmetric means $M,N,K \colon\mathbb{R}_+^2 \to \mathbb{R}_+$ of power growth that satisfy the invariance equation 
  $K(M(x,y),N(x,y))=K(x,y)$ to the broader class of means.
  
  Moreover, we define the lower-- and the upper--order which gives us insight into the order of the solution of this equation in the case when means do not belong to this class.
\end{abstract}
\title{On the invariance equation for means of generalized power growth}
\author[P. Pasteczka]{Pawe\l{} Pasteczka}
\address{Institute of Mathematics \\  University of the National Education Commission \\ Podchor\k{a}\.zych str. 2, 30-084 Krak\'ow, Poland}
\email{pawel.pasteczka@up.krakow.pl}

\subjclass[2020]{26E60}
\keywords{Invariant means, monomial, power means, Gini means, power growth}
\maketitle

\section{Introduction}
The result by Witkowski \cite{Wit14} allows one to establish the order of the solution of the invariance functional equation in the case of means of power growth. We will generalize it in a few ways. First, we introduce a notion of the lower-- and the upper--order. Then we call the mean to be of generalized power growth if these orders are equal. It turns out that this definition generalizes the one introduced in \cite{Wit14}. Having this established, we reprove the main result of the mentioned paper in this more general setup. At the very end, we calculate the generalized order of several nontrivial means. A few of them are of generalized power growth but not of power growth.

\subsubsection*{Invariance property}
The study of invariant means is a classical problem of the iteration theory which has its origin in the Gauss' study \cite{Gau18}. Let us first recall that a \emph{(bivariate) mean} (on $\R_+:=(0,\infty)$) is an arbitrary function $M \colon \R_+\times \R_+ \to \R_+$ satisfying the so-called \emph{mean property}, that is
\Eq{*}{
\min(x,y)\le M(x,y)\le \max(x,y)\quad \text{ for all }x,y \in \R_+.
}
In the most classical setup for a given means $M,\ N \colon \R_+\times \R_+ \to \R_+$ we are searching for a mean $K \colon \R_+\times \R_+ \to \R_+$ such that
\Eq{*}{
K\big(M(x,y),N(x,y)\big)=K(x,y)\quad\text{for all }x,y \in \R_+.
}
Then we say that $K$ is \emph{$(M,N)$-invariant}. Under the assumption that both $M, N$ are continuous and 
\Eq{*}{
\abs{M(x,y)-N(x,y)}<\abs{x-y} \text{ for all }x,y\in \R_+\text{ with }x \ne y,
}
there exists exactly one $(M,N)$-invariant mean which is also continuous; see, for example, \cite[Theorem 8.2]{BorBor87}. For details regarding the problem of invariance as well as related issues, we refer the reader to the recent paper Jarczyk--Jarczyk \cite{JarJar18}.

\subsubsection*{Properties of means, means of power growth}
It is quite usual to assume that means are symmetric (that is $M(x,y)=M(y,x)$ for all $x,y \in \R_+$) and homogeneous (which states that $M(tx,ty)=t M(x,y)$ for all $x,y,t \in \R_+$). There is a folk result that states that whenever $M, N$ are both symmetric (resp. homogeneous) then the $(M, N)$-invariant mean also admits these properties (provided that it is uniquely determined). Thus from now on let $\HS$ denote the family of all homogeneous, symmetric means $M \colon \R_+^2 \to \R_+$.

Following the idea contained in \cite{Wit14}, we say that a mean $M \in \HS$ is of \emph{power growth}
if there exist a real number $\ord(M)$ and a number $C_M \in (0,\infty)$ such that
\Eq{*}{
\lim_{x \to 0^+} \frac{M(x,1)}{x^{\ord(M)}}=C_M.
}
We shall call $\ord(M)$ the \emph{order of $M$}.
The class of all homogeneous, symmetric means $M \colon \R_+^2 \to \R_+$ of power growth will be denoted by $\PG$. Let us recall the result by Witkowski~\cite{Wit14}.

\begin{prop}[\!\!\cite{Wit14}, Theorem 1]
 Let $M,\,N,\,K \in \PG$. Assume $\ord(M) \ge \ord(N)$ and
\Eq{*}{
C_M^{\ord(K)} C_N^{1-\ord(K)} \ne 1 \qquad \text{or} \qquad \ord(K)(1 - \ord(M) + \ord(N)) \ne \ord(N).
}
If $K(M(x, y), N(x, y)) = K(x, y)$, then $\ord(K) = \ord(M) = \ord(N)$.
\end{prop}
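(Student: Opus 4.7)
The plan is to substitute $y=1$ in the invariance equation and compare the two sides asymptotically as $x\to0^+$, replacing each mean by the leading term supplied by the power-growth definition. Writing $m:=\ord(M)$, $n:=\ord(N)$, $k:=\ord(K)$ and $a(x):=M(x,1)\sim C_M x^{m}$, $b(x):=N(x,1)\sim C_N x^{n}$, the key identity I would exploit is the homogeneity of $K$ in the form $K(a,b)=b\cdot K(a/b,1)$. From there the argument splits into the two cases $m>n$ and $m=n$.

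In the case $m>n$ the ratio $a(x)/b(x)\sim(C_M/C_N)\,x^{m-n}$ tends to $0$, so the power-growth asymptotic of $K$ at $0^+$ applies to $K(a/b,1)$, and multiplying by $b$ gives
\Eq{*}{K\bigl(a(x),b(x)\bigr)\sim C_K\,C_M^{k}C_N^{1-k}\,x^{\,n+k(m-n)}.}
The invariance equation equates this with $K(x,1)\sim C_K x^{k}$, and a comparison of leading coefficients and exponents shows it is consistent only if \emph{both} $C_M^{k}C_N^{1-k}=1$ and $n+k(m-n)=k$ hold: a mismatch in the exponent sends the ratio to $0$ or $\infty$, while a mismatch only in the coefficient sends it to a wrong positive constant. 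Noting that $n+k(m-n)=k$ is a rearrangement of $k(1-m+n)=n$, the hypothesis of the proposition is exactly the failure of (at least) one of these two equalities. So the case $m>n$ is impossible and $m=n$.

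With $m=n$ the ratio $a/b$ converges to $C_M/C_N\in(0,\infty)$, and here I would not use any asymptotic of $K$. The mean property alone gives
\Eq{*}{\min(a/b,1)\le K(a/b,1)\le\max(a/b,1),}
which traps $K(a/b,1)$ inside a fixed \emph{positive} compact subinterval of $\R_+$ for $x$ near $0$. Dividing $K(a,b)=K(x,1)$ by $x^{k}$ then gives
\Eq{*}{\frac{b(x)}{x^{k}}\cdot K(a/b,1)\longrightarrow C_K\in(0,\infty),}
while $b(x)/x^{k}=(b(x)/x^{n})\,x^{n-k}\to C_N\cdot\lim x^{n-k}$; any strict inequality $n\neq k$ forces the left side to $0$ (if $n>k$) or to $\infty$ (if $n<k$), contradicting $C_K\in(0,\infty)$. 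Hence $n=k$, and together with the previous paragraph $\ord(M)=\ord(N)=\ord(K)$.

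The main obstacle is the second case, since I have no asymptotic description of $K$ at the interior point $C_M/C_N$, so a one-sided $O(1)$ bound on $K(a/b,1)$ would rule out only one of $n>k$ and $n<k$. What saves the argument is that the mean property supplies a \emph{two-sided} bound by strictly positive constants --- the lower bound $\min(C_M/C_N,1)$ is crucially positive --- so both sub-cases are eliminated at once.
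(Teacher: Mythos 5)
Your argument is correct. Substituting $y=1$, writing $a(x)=M(x,1)\sim C_Mx^{m}$, $b(x)=N(x,1)\sim C_Nx^{n}$ and using $K(a,b)=b\,K(a/b,1)$ is sound; in the case $m>n$ the composition of asymptotics is legitimate because $a/b\to 0^+$, and the invariance identity forces both the exponent relation $n+k(m-n)=k$ (i.e.\ $k(1-m+n)=n$) and the coefficient relation $C_M^{k}C_N^{1-k}=1$, contradicting the stated disjunction; in the case $m=n$ the two-sided positive bound $\min(a/b,1)\le K(a/b,1)\le\max(a/b,1)$ indeed pins down $k=n$ without any asymptotic information about $K$ at the interior point $C_M/C_N$. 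Note, however, that the paper does not prove this proposition at all: it is imported from Witkowski's paper, and the in-text machinery that replaces it (Theorem~\ref{thm:main} and Corollaries~\ref{cor:2.8}, and the $\GPG$ corollary) proceeds quite differently --- it never uses the constants $C_M,C_N,C_K$ or exact limits, only $\varepsilon$-perturbed power bounds coming from the lower and upper orders, under the hypothesis that $x\mapsto M(x,1)/N(x,1)$ is bounded above, with a case split on whether $\limsup_{x\to0^+}m(x)/n(x)>0$ or $m(x)/n(x)\to0$ (the analogue, at the level of the ratio, of your split $m=n$ versus $m>n$). The trade-off is clear: your direct asymptotic route needs the limits defining $\PG$ to exist, but precisely because of that it can exploit the alternative hypothesis $C_M^{\ord(K)}C_N^{1-\ord(K)}\ne1$, which has no counterpart in the paper's generalized framework; the paper's approach sacrifices the coefficient condition and recovers only the exponent identity of Corollary~\ref{cor:PG}, but applies to means that are merely of generalized power growth (or even only have distinct lower and upper orders).
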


This result has an immediate corollary.
\begin{cor}\label{cor:PG}
Let $M,\,N,\,K \in \PG$ with $\ord(M) \ge \ord(N)$. 
  If $K$ is $(M,N)$-invariant then 
  \Eq{*}{
  \ord(K)(1 - \ord(M) + \ord(N)) = \ord(N).
  }
\end{cor}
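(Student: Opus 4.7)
The corollary is a purely logical consequence of the quoted proposition, so the plan is simply to argue by contradiction, activating the second disjunct in the proposition's hypothesis.

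Suppose, aiming at a contradiction, that $K$ is $(M,N)$-invariant but
\[
\ord(K)\bigl(1-\ord(M)+\ord(N)\bigr)\ne \ord(N).
\]
Then in particular the second of the two alternative conditions
\[
C_M^{\ord(K)} C_N^{1-\ord(K)} \ne 1 \quad\text{or}\quad \ord(K)\bigl(1-\ord(M)+\ord(N)\bigr) \ne \ord(N)
\]
is fulfilled. Since all three means belong to $\PG$ and $\ord(M)\ge \ord(N)$, the hypotheses of the proposition are met, and we may conclude
\[
\ord(K)=\ord(M)=\ord(N).
\]

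Now substitute this common value, call it $r$, back into the expression on the left-hand side of the putative inequality: $\ord(K)(1-\ord(M)+\ord(N))=r\cdot(1-r+r)=r=\ord(N)$. This contradicts our assumption, so the desired equality must hold.

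There is no obstacle here: the proof is a one-line tautology given the proposition, and the only mild subtlety is recognizing that the disjunctive hypothesis of the proposition is designed precisely so that negating the conclusion of the corollary immediately triggers it.
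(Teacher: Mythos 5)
Your argument is correct and is precisely the intended one: the paper presents the corollary as an "immediate corollary" of Witkowski's proposition, and the implicit reasoning is exactly your contrapositive step (negating the conclusion triggers the second disjunct, forcing $\ord(K)=\ord(M)=\ord(N)$, which contradicts the negation since then $\ord(K)(1-\ord(M)+\ord(N))=\ord(N)$).
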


The aim of this paper is to reprove Corollary~\ref{cor:PG} for a wider family of means, which we call the means of generalized power growth. Moreover, we study two side problems, that is: \\ --- to establish certain bounds for the order of invariant mean using the idea of lower and upper orders,\\--- to prove that there exist certain means which are of generalized power growth but not of power growth. 

\section{Lower and upper orders}

In this section, we introduce the notion of lower and upper orders. First observe that, roughly speaking, the mean $M \in \HS$ is of power growth $\alpha$ if (and only if) there exists $C>0$ such that $M(x,1)=Cx^\alpha(1+o(1))$ for $x$ close to $0$.
In this spirit, $M \in \HS$ is going to be of generalized power growth $\alpha$ if $M(x,1)=x^{\alpha+o(1)}$ for $x$ close to $0$. In order to make this possible, we introduce the lower and the upper order first.

For $M \in \HS$ we define the \emph{lower order} $\lo(M)$ and the \emph{upper order} $\uo(M)$ by 
\Eq{*}{
\lo(M)&:=\sup\Big\{\alpha \in \R \colon \limsup_{x \to 0^+}\frac{M(x,1)}{x^\alpha}=0 \Big\},\\
\uo(M)&:=\inf\Big\{\alpha \in \R \colon \liminf_{x \to 0^+}\frac{M(x,1)}{x^\alpha}=\infty \Big\}.
}

Clearly, since $\frac{M(x,1)}{ x^\alpha}$ is positive for all $x>0$, we have
\Eq{*}{
\limsup_{x \to 0^+}\frac{M(x,1)}{x^\alpha}=0\phantom{\infty} &\Longrightarrow \:\:\frac{M(x,1)}{x^\alpha}\text{ is convergent to }0,\\
\liminf_{x \to 0^+}\frac{M(x,1)}{x^\alpha}=\infty\phantom{0} &\Longrightarrow \:\:\frac{M(x,1)}{x^\alpha}\text{ is divergent.}
}
On the other hand we cannot assume anything related to convergence (divergence) of the function $x\mapsto\frac{M(x,1)}{x^\alpha}$ in the definition of lower and upper orders. Now we prove that both $\lo(M)$ and $\uo(M)$ are finite for every symmetric and homogeneous mean $M$. 

\begin{lem}\label{lem:louocomp}
Let $M \colon \R_+^2 \to \R_+$ be a symmetric and homogeneous mean. Then 
\begin{align}
\liminf_{x \to 0^+}\frac{M(x,1)}{x^\alpha}&=\infty\phantom{0} \qquad \text{ for all } \alpha>\uo(M), \label{X1}\\
\limsup_{x \to 0^+}\frac{M(x,1)}{x^\alpha}&=0\phantom{\infty} \qquad \text{ for all } \alpha<\lo(M). \label{X2}
\end{align}
Moreover $0\le \lo(M)\le \uo(M)\le 1$.
\end{lem}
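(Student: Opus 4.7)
The plan is to prove the limit statements \eq{X1}--\eq{X2} first using only the infimum/supremum definitions of $\uo(M)$ and $\lo(M)$, then to extract the bounds $\uo(M)\le 1$ and $\lo(M)\ge 0$ from the mean property, and finally to deduce the inequality $\lo(M)\le\uo(M)$ by a contradiction that combines \eq{X1} with \eq{X2}.

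For \eq{X1}, I would fix $\alpha>\uo(M)$ and invoke the definition of infimum to pick some $\beta\in[\uo(M),\alpha)$ with $\liminf_{x\to 0^+} M(x,1)/x^\beta=\infty$. Writing
\Eq{*}{
\frac{M(x,1)}{x^\alpha}=\frac{M(x,1)}{x^\beta}\cdot x^{\beta-\alpha},
}
the first factor is bounded below by, say, $1$ on some right neighbourhood of $0$, and the second tends to $+\infty$ because $\beta-\alpha<0$, so the product also has liminf $+\infty$. The proof of \eq{X2} is symmetric: given $\alpha<\lo(M)$, pick $\beta\in(\alpha,\lo(M)]$ with $\limsup_{x\to 0^+}M(x,1)/x^\beta=0$ (which for a positive function means the ordinary limit is $0$), then observe that in the analogous decomposition the extra factor $x^{\beta-\alpha}$ also tends to $0$, so the product tends to $0$ and a fortiori has limsup $0$.

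Next I would establish the bounds. The mean property yields $x\le M(x,1)\le 1$ for every $x\in(0,1)$. Hence for an arbitrary $\alpha>1$ one gets $M(x,1)/x^\alpha\ge x^{1-\alpha}\to\infty$, which shows $\alpha$ belongs to the set defining $\uo(M)$, so $\uo(M)\le\alpha$; taking $\alpha\to 1^+$ gives $\uo(M)\le 1$. Similarly, for every $\alpha<0$ one has $M(x,1)/x^\alpha\le x^{-\alpha}\to 0$, placing $\alpha$ in the set defining $\lo(M)$, whence $\lo(M)\ge\alpha$ and therefore $\lo(M)\ge 0$.

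Finally, assume toward a contradiction that $\lo(M)>\uo(M)$ and pick any $\gamma\in(\uo(M),\lo(M))$. By the already proved \eq{X1} the function $x\mapsto M(x,1)/x^\gamma$ has liminf $+\infty$ at $0^+$, while by \eq{X2} it has limsup $0$, which is impossible for a positive function. I expect no serious obstacle here; the only point that requires a little care is the selection of $\beta$ in the two arguments for \eq{X1}--\eq{X2}, since the value $\uo(M)$ (respectively $\lo(M)$) need not itself lie in the set defining the infimum (supremum), but because we have strict inequality between $\alpha$ and the order, the definition of infimum/supremum readily supplies such a $\beta$.
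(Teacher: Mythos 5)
Your proposal is correct and follows essentially the same route as the paper: pick an intermediate exponent $\beta$ strictly between $\alpha$ and the order, compare $M(x,1)/x^\alpha$ with $M(x,1)/x^\beta$, and use the mean property $x\le M(x,1)\le 1$ near $0$ to get $\uo(M)\le 1$ and $\lo(M)\ge 0$. The only cosmetic differences are that you factor out $x^{\beta-\alpha}$ where the paper simply uses $x^\alpha<x^\beta$ on $(0,1)$, and that you obtain $\lo(M)\le\uo(M)$ by contradiction from \eq{X1} and \eq{X2} rather than by the paper's direct observation that the limsup is infinite for every $\alpha>\uo(M)$; both variants are sound.
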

\begin{proof}
 Fix $\alpha>\uo(M)$. By the definition there exists $\beta \in (\uo(M),\alpha)$ such that 
 \Eq{*}{
\liminf_{x \to 0^+}\frac{M(x,1)}{x^\beta}=\infty.
}
 But $x^\alpha<x^\beta$ for all $x \in (0,1)$, thus 
 \Eq{*}{
\liminf_{x \to 0^+}\frac{M(x,1)}{x^\alpha} \ge \liminf_{x \to 0^+}\frac{M(x,1)}{x^\beta} =\infty,
}
which is \eq{X1}. This inequality also implies 
 \Eq{*}{
\limsup_{x \to 0^+}\frac{M(x,1)}{x^\alpha}=\liminf_{x \to 0^+}\frac{M(x,1)}{x^\alpha} =\infty\qquad \text{ for all }\alpha > \uo(M),
}
which yields $\lo(M)\le \uo(M)$. 
To prove the inequality $\uo(M) \le 1$, it is sufficient to observe that 
\Eq{*}{
\liminf_{x \to 0^+}\frac{M(x,1)}{x^\alpha}\ge \lim_{x \to 0^+} x^{1-\alpha}=\infty \qquad \text{for every }\alpha>1.
}
Analogously to \eq{X1}
we can show \eq{X2}. Moreover
\Eq{*}{
\limsup_{x \to 0^+}\frac{M(x,1)}{x^\alpha}\le \lim_{x \to 0^+} x^{-\alpha}=0 \qquad\text{ for all }\alpha<0,
}
and thus $\lo(M)\ge 0$. 
\end{proof}

In the next proposition, we show that we can use these values to describe the behavior of the mapping $y\mapsto M(y,1)$ close to infinity. In this result, we introduce the equivalent definition of both orders and prove the related counterpart of the above lemma. %Lemma~\ref{lem:louocomp}.

\begin{prop}
    Let $M \in \HS$. Then
\Eq{*}{
\inf\Big\{\beta \in \R \colon \limsup_{y \to \infty}\frac{M(y,1)}{y^\beta}=0 \Big\}&=1-\lo(M),\\
\sup\Big\{\beta \in \R \colon \liminf_{y \to \infty}\frac{M(y,1)}{y^\beta}=\infty \Big\}&=1-\uo(M).
}
Furthermore
\Eq{*}{
\limsup_{y \to \infty}\frac{M(y,1)}{y^\beta}&=0\phantom{\infty}\qquad \text{ for all } \beta>1-\lo(M),\\
\liminf_{y \to \infty}\frac{M(y,1)}{y^\beta}&=\infty\phantom{0}\qquad \text{ for all } \beta<1-\uo(M).\\
}
\end{prop}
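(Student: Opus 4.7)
The plan is to reduce everything near $\infty$ to the already-understood behavior near $0$ by means of the substitution $y=1/x$ and the symmetry--homogeneity identity. First, I would observe that for every $y>0$, homogeneity and symmetry give
\[
M(y,1)=y\,M(1,1/y)=y\,M(1/y,1).
\]
Setting $x:=1/y$ and any $\beta\in\R$, this yields the key formula
\[
\frac{M(y,1)}{y^\beta}=y^{1-\beta}\,M(1/y,1)=\frac{M(x,1)}{x^{1-\beta}},
\]
so with $\alpha:=1-\beta$ the behavior of $M(y,1)/y^\beta$ as $y\to\infty$ coincides \emph{exactly} with the behavior of $M(x,1)/x^\alpha$ as $x\to 0^+$.

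Second, I would translate the two defining sets through the affine change of variable $\beta=1-\alpha$. Since $\alpha\mapsto 1-\alpha$ is order-reversing, $\sup$ on the $\alpha$-side turns into $\inf$ on the $\beta$-side and vice versa. Concretely,
\[
\bigl\{\beta\colon \textstyle\limsup_{y\to\infty}M(y,1)/y^\beta=0\bigr\}
=1-\bigl\{\alpha\colon \textstyle\limsup_{x\to 0^+}M(x,1)/x^\alpha=0\bigr\},
\]
and taking infimum on the left corresponds to $1-\sup$ on the right, giving $1-\lo(M)$. The analogous identity with $\liminf$ in place of $\limsup$ converts the $\sup$ on the left into $1-\inf$ on the right, producing $1-\uo(M)$.

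Third, the two ``furthermore'' assertions are immediate corollaries of Lemma~\ref{lem:louocomp}. If $\beta>1-\lo(M)$ then $\alpha:=1-\beta<\lo(M)$, so \eq{X2} gives $\limsup_{x\to 0^+}M(x,1)/x^\alpha=0$, which by the key formula is exactly $\limsup_{y\to\infty}M(y,1)/y^\beta=0$. Symmetrically, if $\beta<1-\uo(M)$ then $1-\beta>\uo(M)$, so \eq{X1} applied to $\alpha:=1-\beta$ yields $\liminf_{x\to 0^+}M(x,1)/x^{1-\beta}=\infty$, which translates to $\liminf_{y\to\infty}M(y,1)/y^\beta=\infty$.

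The argument is essentially a bookkeeping exercise; the only thing to watch is the order-reversal caused by $\beta=1-\alpha$, which swaps $\sup\leftrightarrow\inf$ and therefore also swaps the roles of $\lo$ and $\uo$ in the second identity compared to the first. No new analytic input is needed beyond the previous lemma and the homogeneity--symmetry identity.
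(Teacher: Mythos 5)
Your proposal is correct and follows essentially the same route as the paper: the homogeneity--symmetry identity $M(y,1)=yM(1/y,1)$ with the substitution $x=1/y$, followed by the order-reversing change $\beta=1-\alpha$ that swaps $\sup$ and $\inf$, and a direct appeal to Lemma~\ref{lem:louocomp} for the limit statements (where the paper merely says the argument is analogous and omits it, you spell it out, which is fine).
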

\begin{proof}
    Since $M$ is homogeneous, we have
    \Eq{*}{
    \limsup_{y \to \infty}\frac{M(y,1)}{y^\beta}=\limsup_{y \to \infty}\frac{y M(1,\frac1y)}{y^\beta}=\limsup_{y \to \infty}\frac{M(1,\frac1y)}{y^{\beta-1}}.
    }
    Thus, putting $x:=\frac1y$, we obtain
    \Eq{221}{
    \limsup_{y \to \infty}\frac{M(y,1)}{y^\beta}=\limsup_{x \to 0^+}\frac{M(1,x)}{x^{1-\beta}}.
    }
Consequently,
\Eq{*}{
\inf\Big\{\beta \in \R \colon \limsup_{y \to \infty}\frac{M(y,1)}{y^\beta}=0 \Big\}
&=\inf\Big\{\beta \in \R \colon \limsup_{x \to 0^+}\frac{M(1,x)}{x^{1-\beta}}=0 \Big\}\\
&=\inf\Big\{1-\alpha \colon \alpha \in \R,\ \limsup_{x \to 0^+}\frac{M(1,x)}{x^{1-(1-\alpha)}}=0 \Big\}\\
&=1-\sup\Big\{\alpha \in \R \colon \limsup_{x \to 0^+}\frac{M(1,x)}{x^{\alpha}}=0 \Big\}\\
&=1-\lo(M).
}
Similarly, we can show that \eq{221} holds for the lower limits, whence 
\Eq{*}{
\sup\Big\{\beta \in \R \colon \liminf_{y \to \infty}\frac{M(y,1)}{y^\beta}=\infty \Big\}&=\sup\Big\{\beta \in \R \colon \liminf_{x \to 0^+}\frac{M(1,x)}{x^{1-\beta}}=\infty \Big\}\\
&=1-\inf\Big\{\alpha \in \R \colon \liminf_{x \to 0^+}\frac{M(1,x)}{x^{\alpha}}=\infty \Big\}\\
&=1-\uo(M).
}
The rest of the proof is analogous to the proof of Lemma~\ref{lem:louocomp}, and therefore omitted.
\end{proof}

Finally, we show that both (lower- and upper-) orders are decreasing with respect to the standard ordering of means.
\begin{prop}\label{Prop33}
     Let $M,N \in \HS$. If there exists $c \in \R$ such that $M(x,y) \le c N(x,y)$ for all $x,y \in \R_+$. Then $\lo(M)\ge \lo(N)$ and $\uo(M)\ge \uo(N)$.
\end{prop}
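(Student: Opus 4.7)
The plan is to exploit the monotonicity of $\limsup$ and $\liminf$ under pointwise inequalities, together with the equivalences provided by Lemma~\ref{lem:louocomp}. First I would observe that since $M,N$ take values in $\R_+$, the hypothesis $M(x,y)\le cN(x,y)$ forces $c>0$; this is essential so that multiplication by $c$ preserves the direction of inequalities.

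For the lower-order inequality, I would fix an arbitrary $\alpha<\lo(N)$. By \eq{X2} applied to $N$, we have $\limsup_{x\to 0^+}N(x,1)/x^\alpha=0$. Dividing the hypothesis by $x^\alpha$ at $y=1$ and passing to $\limsup$ gives
\Eq{*}{
\limsup_{x\to 0^+}\frac{M(x,1)}{x^\alpha}\le c\cdot\limsup_{x\to 0^+}\frac{N(x,1)}{x^\alpha}=0,
}
and since the ratio is nonnegative, this $\limsup$ equals $0$. Hence $\alpha$ belongs to the set whose supremum defines $\lo(M)$, so $\lo(M)\ge\alpha$. Taking the supremum over all $\alpha<\lo(N)$ yields $\lo(M)\ge\lo(N)$.

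For the upper-order inequality the roles swap. I would fix $\alpha>\uo(M)$; by \eq{X1} applied to $M$, $\liminf_{x\to 0^+}M(x,1)/x^\alpha=\infty$. Rewriting the hypothesis as $N(x,1)\ge M(x,1)/c$ and dividing by $x^\alpha$ gives
\Eq{*}{
\liminf_{x\to 0^+}\frac{N(x,1)}{x^\alpha}\ge \frac1c\liminf_{x\to 0^+}\frac{M(x,1)}{x^\alpha}=\infty,
}
so $\alpha$ lies in the set whose infimum defines $\uo(N)$, giving $\uo(N)\le\alpha$. Taking the infimum over $\alpha>\uo(M)$ yields $\uo(M)\ge\uo(N)$.

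I do not foresee any serious obstacle: the argument is a pair of routine $\limsup/\liminf$ monotonicity chases bridged by Lemma~\ref{lem:louocomp}. The only conceptual subtlety worth flagging is the reversal of the inequality when passing from means to orders — larger means correspond to smaller orders (e.g.\ $\min$ has both orders equal to $1$ while $\max$ has both orders equal to $0$) — which is precisely why the conclusion points in the stated direction.
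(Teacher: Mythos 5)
Your proof is correct and follows essentially the same route as the paper: fix $\alpha<\lo(N)$, use Lemma~\ref{lem:louocomp} together with the pointwise bound $M\le cN$ to get $\limsup_{x\to 0^+}M(x,1)/x^\alpha=0$, and conclude from the definition of $\lo$; your second half is exactly the analogous argument the paper leaves to the reader, carried out correctly.
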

\begin{proof}
    In view of Lemma~\ref{lem:louocomp}, for all $\alpha < \lo(N)$  we have
    \Eq{*}{
    0 \le \limsup_{x \to 0^+}\frac{M(x,1)}{x^\alpha} \le \limsup_{x \to 0^+}\frac{cN(x,1)}{x^\alpha}=0,
    }
    whence, by the definition of $\lo$, we get $\lo(M) \ge \lo(N)$. The proof of the second inequality is analogous.
\end{proof}

\section{Means of generalized power growth}
Motivated by Lemma~\ref{lem:louocomp}, mean $M\in\HS$ is said to be of \emph{generalized power growth} provided the equality $\lo(M)=\uo(M)$ holds. The
class of all homogeneous means of generalized power growth will be denoted by $\GPG$. For every mean $M \in \GPG$ we define the \emph{generalized order} (we refer to it simply as the \emph{order}) by $\ord^*(M):=\lo(M)$ or, equivalently, $\ord^*(M)=\uo(M)$.

We show that every mean which is of power growth is also of generalized power growth (that is $\PG \subset \GPG$ ) and the equality $\ord(M)=\ord^*(M)$ is valid for all $M \in \PG$. The converse inclusion is not true (see Proposition~\ref{prop:Logarithmic}). In what follows we show some sufficient conditions for the mean to be of generalized power growth.

\begin{lem}\label{lem:ord*char}
Let $M \in \HS$. If  there exist $C \in [0,1]$ such that
\Eq{E:GPGC}{
\liminf_{x \to 0^+}\frac{M(x,1)}{x^\alpha}&=\infty\phantom{0}\qquad \text{ for all } \alpha>C,\\
\limsup_{x \to 0^+}\frac{M(x,1)}{x^\alpha}&=0\phantom{\infty}\qquad \text{ for all } \alpha<C.
}
then $M$ is of generalized power growth (that is $M \in \GPG$) and $\ord^*(M)=C$. 

Conversely, for every $M \in \GPG$ equalities \eq{E:GPGC} are valid with $C=\ord^*(M)$.
\end{lem}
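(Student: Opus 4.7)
The plan is to split the biconditional into its two directions and handle each by directly unwinding the definitions of $\lo(M)$ and $\uo(M)$ given in Section~2.

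For the ``conversely'' part, I fix $M \in \GPG$, set $C := \ord^*(M) = \lo(M) = \uo(M)$, and observe that the two lines of \eq{E:GPGC} then coincide with the implications \eq{X1} and \eq{X2} already proved in Lemma~\ref{lem:louocomp} (applied with $\uo(M) = \lo(M) = C$). The range restriction $C \in [0,1]$ is recorded in the last sentence of that same lemma.

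For the forward direction, I assume \eq{E:GPGC} holds for some $C \in [0,1]$ and establish $\lo(M) = \uo(M) = C$ in two independent arguments. To see $\lo(M) = C$: the second line of \eq{E:GPGC} puts every $\alpha < C$ in the defining set of $\lo(M)$, which gives $\lo(M) \ge C$; and the first line combined with the elementary bound $\limsup \ge \liminf$ forces $\limsup_{x \to 0^+} M(x,1)/x^\alpha = \infty$ for every $\alpha > C$, so no such $\alpha$ lies in the defining set, giving $\lo(M) \le C$. The argument for $\uo(M) = C$ is entirely symmetric, using the first line of \eq{E:GPGC} directly for $\uo(M) \le C$ and the second line together with $\liminf \le \limsup$ for $\uo(M) \ge C$.

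The only point worth flagging is that \eq{E:GPGC} is phrased with the ``opposite'' type of limit (an infinite $\liminf$ in the first line, a vanishing $\limsup$ in the second) compared with what appears in the defining suprema/infima of $\lo$ and $\uo$. The trivial inequalities $\liminf \le \limsup$ are precisely what bridge the two formulations in each of the four bookkeeping steps above, so there is no real obstacle — the lemma is a straightforward reconciliation of two slightly different ways of packaging the same information.
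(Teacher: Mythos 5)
Your proof is correct and takes essentially the same approach as the paper: the converse direction is exactly Lemma~\ref{lem:louocomp} (i.e.\ \eq{X1}--\eq{X2} with $\lo(M)=\uo(M)=C$), and the forward direction is a direct unwinding of the definitions of $\lo$ and $\uo$. The only cosmetic difference is that the paper obtains $\lo(M)\le C$ and $\uo(M)\ge C$ by citing the inequality $\lo(M)\le\uo(M)$ from Lemma~\ref{lem:louocomp}, whereas you re-derive these bounds directly from $\liminf\le\limsup$ and the positivity of $\frac{M(x,1)}{x^\alpha}$; the two arguments are interchangeable.
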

\begin{proof}
First assume that \eq{E:GPGC} is valid. Then by the definition, we have $\uo(M)\le C$ and $\lo(M) \ge C$. Thus, by Lemma~\ref{lem:louocomp}, we obtain $C \le \lo(M) \le \uo(M) \le C$ which implies $\lo(M)=\uo(M)=C$. Therefore $M$ is generalized power growth and $\ord^*(M)=\lo(M)=C$.
The converse implication is easily implied by Lemma~\ref{lem:louocomp}.
\end{proof}

This lemma has the following easy-to-see corollary.
\begin{cor}\label{cor:coincide}
For every $M \in \PG$  we have $M \in \GPG$ and $\ord(M)=\ord^*(M)$. 
\end{cor}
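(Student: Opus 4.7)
The plan is to reduce the statement to a direct application of Lemma~\ref{lem:ord*char}. Fix $M \in \PG$ and set $\alpha_0 := \ord(M)$ together with the associated constant $C_M \in (0,\infty)$ furnished by the definition of power growth, so that $M(x,1)/x^{\alpha_0}\to C_M$ as $x\to 0^+$.

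First I would verify the hypothesis $C \in [0,1]$ of Lemma~\ref{lem:ord*char}, i.e.\ that $\alpha_0 \in [0,1]$. This is forced by the mean property: for $0<x<1$ we have $x \le M(x,1) \le 1$, and these two-sided bounds combined with $M(x,1)\sim C_M x^{\alpha_0}$ as $x\to 0^+$ preclude both $\alpha_0<0$ (which would make $M(x,1)$ blow up) and $\alpha_0>1$ (which would force $M(x,1)$ to go below $x$). Alternatively, one can quote Proposition~\ref{Prop33} with $N$ being the minimum and maximum means.

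Next, for any $\alpha>\alpha_0$, I would factor
\Eq{*}{
\frac{M(x,1)}{x^\alpha} = \frac{M(x,1)}{x^{\alpha_0}}\cdot x^{\alpha_0-\alpha}.
}
The first factor converges to the positive number $C_M$, while the second tends to $\infty$ since $\alpha_0-\alpha<0$ and $x\to 0^+$; hence the product has liminf (in fact limit) equal to $\infty$. Symmetrically, for $\alpha<\alpha_0$ the same factorization gives a product tending to $0$, so the limsup vanishes. This is precisely condition \eq{E:GPGC} with $C=\alpha_0$.

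Applying Lemma~\ref{lem:ord*char} then yields $M \in \GPG$ with $\ord^*(M)=\alpha_0=\ord(M)$, completing the proof. There is no real obstacle here; the argument is just the observation that a sharp power asymptotic automatically implies the two one-sided scaling conditions used to define the generalized order, and conversely the sharp asymptotic pins down the value of that generalized order.
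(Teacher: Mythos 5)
Your proposal is correct and follows exactly the paper's route: the paper also proves this corollary by applying Lemma~\ref{lem:ord*char} with $C:=\ord(M)$, and your factorization $\frac{M(x,1)}{x^\alpha}=\frac{M(x,1)}{x^{\ord(M)}}\cdot x^{\ord(M)-\alpha}$ together with the check that $\ord(M)\in[0,1]$ simply spells out the routine details the paper leaves implicit.
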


Indeed, we can simply apply the above lemma with $C:=\ord(M)$.

\subsection{Generalizations of Corollary~\ref{cor:PG}}
At the moment we generalize Corollary~\ref{cor:PG} in two ways. First, we prove that this statement can be split into two inequalities involving the lower and the upper order of means. Later we show that Corollary~\ref{cor:PG} remains valid for means which are of generalized power growth.

\vskip4mm
\begin{thm}\label{thm:main}
    Let $M,N,K \in \HS$ such that the ratio $x \mapsto \frac{M(x,1)}{N(x,1)}$ is bounded from above. Define
    \Eq{*}{
    K_{M,N}(x,y):=K(M(x,y),N(x,y)) \qquad (x,y \in \R_+).
    }
Then $K_{M,N} \in \HS$ and
    \Eq{Emain}{
\lo(K_{M,N}) &\ge \big(\lo (M)- \uo(N)\big)\lo(K)+\lo(N); \\
\uo(K_{M,N}) &\le \big(\uo(M)-\lo(N)\big)\uo(K)+\uo(N).
    }
\end{thm}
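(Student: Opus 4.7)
The plan is to dispatch the structural claim $K_{M,N}\in\HS$ by inspection and then to reduce both order inequalities in \eqref{Emain} to a single one-variable asymptotic analysis of $r\mapsto K(r,1)$ on the bounded interval $(0,c]$, where $c>0$ is a constant bounding the ratio $r(x):=M(x,1)/N(x,1)$ supplied by the hypothesis. The key tool is the identity
\Eq{*}{
K_{M,N}(x,1)=N(x,1)\,K(r(x),1),
}
which follows at once from the homogeneity of $K$. That $K_{M,N}\in\HS$ is immediate: symmetry and homogeneity pass through the composition, and since $M(x,y),N(x,y)\in[\min(x,y),\max(x,y)]$ and $K$ is itself a mean, so is $K_{M,N}$.

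The technical heart of the argument will be a pair of uniform estimates on $(0,c]$. For every $\alpha\in[0,\lo(K))$ I would show that
\Eq{*}{
D_\alpha:=\sup_{r\in(0,c]}\frac{K(r,1)}{r^\alpha}<\infty,
}
and dually that for every $\alpha'>\uo(K)$ the infimum $E_{\alpha'}:=\inf_{r\in(0,c]}K(r,1)/r^{\alpha'}$ is strictly positive. Both claims follow by splitting $(0,c]$ into a small neighborhood $(0,r_0)$ of $0$, on which Lemma~\ref{lem:louocomp} forces $K(r,1)/r^\alpha\to 0$ or $K(r,1)/r^{\alpha'}\to\infty$, and the compact complement $[r_0,c]$, on which the two-sided estimate $\min(r,1)\le K(r,1)\le\max(r,1)$ suffices. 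Substituting into the factorization yields
\Eq{*}{
E_{\alpha'}\,N(x,1)^{1-\alpha'}M(x,1)^{\alpha'}\le K_{M,N}(x,1)\le D_\alpha\,N(x,1)^{1-\alpha}M(x,1)^{\alpha}.
}

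Dividing by $x^{(1-\alpha)\gamma+\alpha\beta}$ and choosing $\beta<\lo(M)$, $\gamma<\lo(N)$ (respectively $\beta>\uo(M)$, $\gamma>\uo(N)$) forces the upper estimate to vanish and the lower one to diverge as $x\to 0^+$. Letting $\alpha\to\lo(K)^-$, $\alpha'\to\uo(K)^+$ together with the corresponding one-sided limits for $\beta,\gamma$ produces
\Eq{*}{
\lo(K_{M,N})\ge\lo(K)\lo(M)+(1-\lo(K))\lo(N)
}
and the dual inequality $\uo(K_{M,N})\le\uo(K)\uo(M)+(1-\uo(K))\uo(N)$, from which the bounds in \eqref{Emain} follow using $\lo(N)\le\uo(N)$ and $\lo(K),\uo(K)\in[0,1]$. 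The main obstacle I anticipate is that $r(x)$ need not tend to $0$ as $x\to 0^+$, so the $r\to 0^+$ asymptotics of $K(r,1)$ cannot be plugged pointwise into $r(x)$; the uniform constants $D_\alpha,E_{\alpha'}$ valid on all of $(0,c]$ are introduced precisely to bypass this issue. The boundary cases $\lo(K)=0$ and $\uo(K)=1$, in which the admissible ranges of $\alpha$ or $\alpha'$ collapse, are handled by taking $\alpha=0$ or $\alpha'=1$ and invoking the mean property directly.
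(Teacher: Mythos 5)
Your proposal is correct, but it follows a genuinely different route than the paper. The paper works with the same factorization $K_{M,N}(x,1)=n(x)\,k\big(\tfrac{m(x)}{n(x)}\big)$, but then splits into two cases according to whether $\limsup_{x\to0^+}\tfrac{m(x)}{n(x)}>0$ or $\tfrac{m(x)}{n(x)}\to 0$, treats $\lo(K)=0$ separately, and in the second case runs an $\varepsilon$-shifted exponent computation in which the reciprocal $\tfrac{x^{\uo(N)+\varepsilon}}{n(x)}$ appears; it is precisely this need for a lower estimate of $n$ that produces the mixed terms $\uo(N)$ and $\lo(N)$ in \eq{Emain}. You avoid the case distinction entirely by proving the uniform bounds $D_\alpha<\infty$ for $\alpha\in[0,\lo(K))$ and $E_{\alpha'}>0$ for $\alpha'>\uo(K)$ on the whole range $(0,c]$ of the ratio (the splitting into a neighborhood of $0$, where Lemma~\ref{lem:louocomp} applies, and the compact remainder, where the mean property suffices, is sound), and then you only ever estimate $m$ and $n$ from the same side. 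This yields the cleaner weighted-average bounds $\lo(K_{M,N})\ge\lo(K)\lo(M)+(1-\lo(K))\lo(N)$ and $\uo(K_{M,N})\le\uo(K)\uo(M)+(1-\uo(K))\uo(N)$, which are in fact \emph{stronger} than \eq{Emain}: since $\lo(K),\uo(K)\ge 0$ and $\lo(N)\le\uo(N)$, they imply the stated inequalities, and they are strictly sharper whenever $\lo(N)<\uo(N)$ and the relevant order of $K$ is positive. Your handling of the boundary situations is also the right one: for $\lo(K)=0$ take $\alpha=0$ (where $D_0\le\max(c,1)$ by the mean property), and for $\uo(K)=1$ take $\alpha'=1$ (where $E_1\ge 1/\max(c,1)$), noting that in the generic upper-order step one should keep $\alpha'\in(\uo(K),1]$ so that the exponent $1-\alpha'$ stays nonnegative — a point you implicitly acknowledge. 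Written out in full, your argument is a valid and somewhat more economical proof of the theorem.
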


\begin{proof}
Obviously $K_{M,N} \in \HS$. 
For the sake of brevity, define 
\Eq{*}{
l_K:=\lo(K),&\quad u_K:=\uo(K), \\ 
l_M:=\lo(M),&\quad u_M:=\uo(M), \\
l_N:=\lo(N),&\quad u_N:=\uo(N). 
}
Moreover set $m,n,k,r\colon \R_+\to \R_+$ by
\Eq{*}{
m(x):=M(x,1), \quad n(x):=N(x,1), \quad k(x):=K(x,1), \quad r(x):=K_{M,N}(x,1) \quad (x \in \R_+).
}

Then
\Eq{*}{
r(x)=K_{M,N}(x,1)=K\big(M(x,1),N(x,1)\big)=K\big(m(x),n(x)\big),\qquad \text{ for all }x \in \R_+.
}
Since $K \in \HS$ we get
\Eq{*}{
r(x)=n(x) k\big(\tfrac{m(x)}{n(x)}\big)=m(x) k\big(\tfrac{n(x)}{m(x)}\big),\qquad \text{ for all } x \in \R_+.
}

By the assumption (since both $M$ and $N$ are homogeneous) we have 
\Eq{*}{
c:=\sup\bigg\{\frac{m(x)}{n(x)}\colon x \in (0,\infty)\bigg\}=\sup\bigg\{\frac{M(x,1)}{N(x,1)}\colon x \in (0,\infty)\bigg\}\in [1,\infty).}
Thus, by Proposition~\ref{Prop33},  $l_M \ge l_N$ and $u_M \ge u_N$. Moreover, for all $x \in \R_+$,
\Eq{*}{
r(x)=n(x) k\big(\tfrac{m(x)}{n(x)}\big)
 \le cn(x),\quad\text{ and }\quad  
r(x)= m(x) k\big(\tfrac{n(x)}{m(x)}\big) \ge\tfrac1c  m(x),
}
which, applying Proposition~\ref{Prop33} again, implies 
\Eq{*}{
l_N \le \lo(K_{M,N})\le l_M;\quad
u_N\le \uo(K_{M,N})\le u_M.
}

Using these inequalities one can show the first part of \eq{Emain} when $l_K=0$. Therefore, we can restrict the proof of the first inequality to the case $l_K>0$.

\case{1}
If $\limsup_{x\to 0^+}\tfrac{m(x)}{n(x)}>0$ then take a sequence $(x_n)$ converging to $0$ such that $\big(\tfrac{m(x_n)}{n(x_n)}\big)_{n=1}^\infty$ is convergent and has a positive limit. Then $\delta:=\inf\big\{\tfrac{m(x_n)}{n(x_n)}\colon n \in \N\big\}>0$. Thus
\Eq{*}{
cn(x_n) \ge r(x_n)=K(m(x_n),n(x_n))\ge \min(m(x_n),n(x_n)) \ge \delta n(x_n).
}
Whence
\Eq{*}{
\limsup_{x \to 0^+} \frac{r(x)}{x^\alpha} \ge \delta \liminf_{x \to 0^+} \frac{n(x)}{x^\alpha}=+\infty \text{ for all }\alpha>u_N,
}
which yields $\uo(K_{M,N}) \le u_N$.  Thus $u_M=u_N=\uo(K_{M,N})$, which implies the second inequality of \eq{Emain}. To show the first inequality note that
\Eq{*}{
(l_M- u_N)l_K+l_N\le (u_M-u_N)l_K+l_N=l_N\le \lo(K_{M,N}).
}

\case{2} If $\lim_{x \to 0^+} \frac{m(x)}{n(x)}=0$ then, by Lemma~\ref{lem:louocomp} for all $\varepsilon \in (0,l_K)$, we have 
\Eq{*}{
\lim_{x \to 0^+} r(x)x^{\varepsilon-l_N+(l_K-\varepsilon)(u_N-l_M+2\varepsilon)}=
\lim_{x \to 0^+} \frac{n(x)}{x^{l_N-\varepsilon}} \frac{k\big(\tfrac{m(x)}{n(x)}\big)}{\big(\tfrac{m(x)}{n(x)}\big)^{l_K-\varepsilon}} \Big(\frac{m(x)}{x^{l_M-\varepsilon}}\frac{x^{u_N+\varepsilon}}{n(x)}\Big)^{l_K-\varepsilon}=0.
}

Therefore, applying definition of $\lo$ directly, we get
\Eq{*}{
\lo(K_{M,N})\ge \lim_{\varepsilon\to 0} l_N-\varepsilon-(l_K-\varepsilon)(u_N-l_M+2\varepsilon)=l_K(l_M-u_N)+l_N.
}

Now we proceed to the proof of the second inequality. Analogously, for all $\varepsilon \in(0,\infty)$, we have
\Eq{*}{
\frac{r(x)}{x^{(u_N+\varepsilon)+(u_K+\varepsilon)(u_M-l_N)}}=
\frac{k \Big(\frac{m(x)}{n(x)}\Big)}{\Big(\frac{m(x)}{n(x)}\Big)^{u_K+\varepsilon}} \Big(\frac{m(x)}{x^{u_M+\varepsilon}}\frac{x^{l_N-\varepsilon}}{n(x)}\Big)^{u_K+\varepsilon}\frac{n(x)}{x^{u_N+\varepsilon}}.
}
Therefore 
 \Eq{*}{
 \lim_{x \to 0^+}\frac{r(x)}{x^{(u_M-l_N)u_K+u_N+\varepsilon(1+u_M-l_N)}}=\infty\qquad \text{ for all }\varepsilon \in (0,\infty).
 }
Thus, analogously to the previous case,
\Eq{*}{
\uo(K_{M,N}) \le \inf_{\varepsilon\in (0,\infty)} 
(u_M-l_N)u_K+u_N+\varepsilon(1+u_M-l_N)=(u_M-l_N)u_K+u_N,
}
which concludes the proof of the second inequality.
\end{proof}

 \begin{cor}\label{cor:2.8}
    Let $M,N \in \HS$ be such that the ratio $x \mapsto \frac{M(x,1)}{N(x,1)}$ is bounded from above and $K \colon \R_+^2 \to \R_+$ be the $(M,N)$-invariant mean. Then 
    \begin{enumerate}[(i)]
        \item $\lo(K) \ge \frac{\lo(N)}{1+\uo(N)-\lo(M)}$;
        \item $\uo(K) \le \frac{\uo(N)}{1+\lo(N)-\uo(M)}$ unless $\lo(N)=0$ and $\uo(M)=1$.
    \end{enumerate}
 \end{cor}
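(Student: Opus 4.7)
The plan is to apply Theorem~\ref{thm:main} directly to $K$. Since $K$ is $(M,N)$-invariant, the identity $K(M(x,y),N(x,y))=K(x,y)$ says precisely that $K_{M,N}=K$ as functions on $\R_+^2$, hence $\lo(K_{M,N})=\lo(K)$ and $\uo(K_{M,N})=\uo(K)$. The folk result mentioned in the introduction ensures that the invariance relation preserves symmetry and homogeneity, so $K\in\HS$, and the boundedness hypothesis on $M(x,1)/N(x,1)$ is shared with the theorem. All assumptions of Theorem~\ref{thm:main} are therefore in place.

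The first step is to feed $K$ into the two inequalities of Theorem~\ref{thm:main}, which yields
\[
\lo(K)\ge\bigl(\lo(M)-\uo(N)\bigr)\lo(K)+\lo(N),\qquad \uo(K)\le\bigl(\uo(M)-\lo(N)\bigr)\uo(K)+\uo(N).
\]
Collecting the $\lo(K)$ and $\uo(K)$ terms on the left transforms these into
\[
\lo(K)\bigl(1+\uo(N)-\lo(M)\bigr)\ge\lo(N),\qquad \uo(K)\bigl(1+\lo(N)-\uo(M)\bigr)\le\uo(N).
\]
By Lemma~\ref{lem:louocomp} all of $\lo(M),\uo(M),\lo(N),\uo(N)$ lie in $[0,1]$, so both coefficients on the left are nonnegative, and claims (i) and (ii) follow upon dividing through by them.

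The only remaining point is the sign of these coefficients. A brief case check shows that $1+\uo(N)-\lo(M)=0$ forces $\lo(M)=1$ and $\uo(N)=0$, which in turn forces $\lo(N)=0$; then (i) degenerates to the tautology $\lo(K)\cdot 0\ge 0$, consistent with the $0/0$ convention. Symmetrically, $1+\lo(N)-\uo(M)=0$ happens exactly when $\lo(N)=0$ and $\uo(M)=1$, which is precisely the case excluded in the statement of (ii). I do not foresee any real obstacle: the argument is a one-line algebraic consequence of Theorem~\ref{thm:main} combined with the universal bounds supplied by Lemma~\ref{lem:louocomp}, the only subtlety being the bookkeeping of the degenerate case where a denominator vanishes.
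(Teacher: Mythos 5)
Your argument follows essentially the same route as the paper: substitute the invariance identity $K_{M,N}=K$ into Theorem~\ref{thm:main}, collect the $\lo(K)$ and $\uo(K)$ terms, and divide by the resulting coefficients; for (ii) your positivity check ($1+\lo(N)-\uo(M)=0$ forces $\lo(N)=0$ and $\uo(M)=1$, exactly the excluded case) is literally the paper's. The one point of divergence is the coefficient in (i): the paper asserts $\lo(M)\le\lo(N)$ and $\uo(M)\le\uo(N)$ (``analogously to the previous statement'') and concludes $1+\uo(N)-\lo(M)\ge 1+\uo(M)-\lo(M)\ge 1>0$, so that no degenerate case arises, whereas you use only the bounds $0\le\lo(\cdot)\le\uo(\cdot)\le1$ from Lemma~\ref{lem:louocomp} and admit a possible corner $\lo(M)=1$, $\uo(N)=0$ (hence $\lo(N)=0$), which you dispose of by appealing to a ``$0/0$ convention''. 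Since statement (i) carries no exclusion clause, you should say explicitly that in this corner the inequality is interpreted as vacuous (the substantive inequality $\lo(K)\bigl(1+\uo(N)-\lo(M)\bigr)\ge\lo(N)$ reduces to $0\ge0$) rather than invoke an undefined quotient; apart from that phrasing, your treatment is sound. It is worth noting that your more cautious handling is defensible: the comparison inequalities the paper invokes point in the opposite direction to what Proposition~\ref{Prop33} yields from $M\le cN$ (that proposition, as used in the proof of Theorem~\ref{thm:main}, gives $\lo(M)\ge\lo(N)$ and $\uo(M)\ge\uo(N)$), so the paper's claim that the denominator in (i) is always at least $1$ is not justified as written, and the degenerate corner you isolate can in fact occur (e.g.\ $M$ with $\lo(M)=1$ and $N$ with $\uo(N)=0$).
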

\begin{proof}
By the invariance property, we have $K_{M, N}=K$. Then the inequalities in Theorem~\ref{thm:main} simplifies to 
    \Eq{*}{
\lo(K) &\ge \big(\lo (M)- \uo(N)\big)\lo(K)+\lo(N); \\
\uo(K) &\le (\uo(M)-\lo(N))\uo(K)+\uo(N).
    }
Thus to complete the proof, it is sufficient to show the inequalities 
\Eq{*}{
1+\uo(N)-\lo(M)&>0; \\
1+\lo(N)-\uo(M) &> 0 \text{ unless }\lo(N)=0\text{ and }\uo(M)=1.
}
However, analogously to the previous statement, we have $\lo(M) \le \lo(N)$ and $\uo(M) \le \uo(N)$. Thus 
\Eq{*}{
1+\uo(N)-\lo(M) \ge 1+\uo(M)-\lo(M) \ge 1>0.}

To show the second inequality, note that since $(\lo(N),\uo(M))\ne(0,1)$ we have $\uo(M)-\lo(N)<1$, and it easily follows.
\end{proof}

Now we apply this corollary to the case when $M, N \in \GPG$.

\begin{cor}
    Let $M,N \in \GPG$ be such that $\ord^*(M)\ge \ord^*(N)$ and $(\ord^*(M),\ord^*(N))\ne(1,0)$. Moreover, let $K \colon \R_+^2 \to \R_+$ be the $(M,N)$-invariant mean.
    
    Then $K \in \GPG$ and
  \Eq{E:407}{
  \ord^*(K)= \frac{\ord^*(N)}{1 - \ord^*(M) + \ord^*(N)}.
  }
\end{cor}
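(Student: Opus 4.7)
The plan is to apply Corollary~\ref{cor:2.8} with the pair $(M,N)$. Since $M,N\in\GPG$, one has $\lo(M)=\uo(M)=\ord^*(M)$ and $\lo(N)=\uo(N)=\ord^*(N)$. Substituting these equalities into the two estimates supplied by Corollary~\ref{cor:2.8} collapses both bounds onto the single quantity
\Eq{*}{
\frac{\ord^*(N)}{1+\ord^*(N)-\ord^*(M)},
}
which then serves simultaneously as a lower bound for $\lo(K)$ and as an upper bound for $\uo(K)$. Combined with the trivial inequality $\lo(K)\le\uo(K)$ from Lemma~\ref{lem:louocomp}, this forces equality throughout, so $K\in\GPG$ and its order equals the value displayed in \eq{E:407}.

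To legitimise the appeal to Corollary~\ref{cor:2.8} I need to verify its two hypotheses. The exceptional case $\lo(N)=0$ and $\uo(M)=1$ excluded in part~(ii) coincides exactly with the assumption $(\ord^*(M),\ord^*(N))\ne(1,0)$; moreover, combining this exclusion with the bounds $0\le\ord^*(N)\le\ord^*(M)\le 1$ from Lemma~\ref{lem:louocomp} yields the strict positivity $1-\ord^*(M)+\ord^*(N)>0$ that is needed to invert the inequalities without a sign change. For the remaining hypothesis, boundedness from above of $x\mapsto M(x,1)/N(x,1)$, in the strict case $\ord^*(M)>\ord^*(N)$ I would pick $\gamma$ strictly between the two orders and use Lemma~\ref{lem:louocomp} together with its counterpart for the behaviour at infinity to conclude that the ratio tends to zero both at $0^+$ and at $+\infty$; the mean property $\min(x,1)\le M(x,1),\,N(x,1)\le\max(x,1)$ handles every compact subinterval of $\R_+$, and gluing the three regions together yields global boundedness.

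The borderline case $\ord^*(M)=\ord^*(N)=:\alpha$ is where I expect the main obstacle, because the asymptotic argument above no longer forces $M/N$ to vanish at the endpoints and boundedness of that ratio may genuinely fail. Here, however, the target value in \eq{E:407} simplifies to $\alpha$ and can be obtained without Corollary~\ref{cor:2.8}: the auxiliary means $(x,y)\mapsto\min(M(x,y),N(x,y))$ and $(x,y)\mapsto\max(M(x,y),N(x,y))$ both lie in $\HS$ and belong to $\GPG$ with order $\alpha$ (verify via Lemma~\ref{lem:louocomp} using that the pointwise minimum or maximum of two functions converging to $0$, respectively diverging to $+\infty$, does the same), while applying the mean property to $K$ at the point $(M(x,y),N(x,y))$ together with the invariance identity $K(M,N)=K$ gives the sandwich $\min(M,N)\le K\le\max(M,N)$; two applications of Proposition~\ref{Prop33} then squeeze both $\lo(K)$ and $\uo(K)$ to $\alpha$.
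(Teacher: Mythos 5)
Your argument is correct, and it uses the same two key ingredients as the paper (Corollary~\ref{cor:2.8} plus the sandwich $\min(M,N)\le K\le\max(M,N)$ obtained from invariance and the mean property), but the case decomposition is genuinely different and in a sense dual to the paper's. The paper splits on whether the ratio $x\mapsto M(x,1)/N(x,1)$ is bounded: if it is, Corollary~\ref{cor:2.8} applies at once; if it is not, the paper extracts a sequence $x_k\to 0^+$ with $M(x_k,1)\ge N(x_k,1)$ and uses Lemma~\ref{lem:ord*char} to deduce $\ord^*(M)\le\ord^*(N)$, hence equality of the orders, and then sandwiches $K$. You instead split on whether $\ord^*(M)>\ord^*(N)$ or $\ord^*(M)=\ord^*(N)$: in the strict case you must \emph{prove} boundedness of the ratio (which the paper never needs), and your argument for it --- choosing $\gamma$ strictly between the orders, using Lemma~\ref{lem:louocomp} at $0^+$, the proposition on behaviour at $\infty$ (or, more simply, the identity $M(x,1)/N(x,1)=M(1/x,1)/N(1/x,1)$ coming from symmetry and homogeneity), and the mean property on compact intervals --- is sound; in the equal case you avoid Corollary~\ref{cor:2.8} entirely and run the same sandwich as the paper, with the small extra verification via Lemma~\ref{lem:ord*char} and Proposition~\ref{Prop33} that $\min(M,N)$ and $\max(M,N)$ lie in $\GPG$ with the common order. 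In effect the paper proves ``ratio unbounded $\Rightarrow$ equal orders'' while you prove the contrapositive-style statement ``unequal orders $\Rightarrow$ ratio bounded''; the paper's dichotomy is slightly more economical, while yours makes explicit the useful fact that strict inequality of orders already forces the hypothesis of Corollary~\ref{cor:2.8}. Both routes share the same implicit appeal to the folk result that the unique $(M,N)$-invariant mean inherits homogeneity and symmetry, so that $\lo(K)$, $\uo(K)$ and Proposition~\ref{Prop33} can be applied to $K$ at all.
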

\begin{proof}
    If the ratio $x \mapsto \frac{M(x,1)}{N(x,1)}$ is bounded from above, then it is a straightforward application of Corollary~\ref{cor:2.8}. 
    
    If this is not the case, then there exists a divergent sequence $(x_k)$ that converges to zero such that $M(x_k,1)\ge N(x_k,1)$. Then, for all $\alpha>\ord^*(N)$, we have
    \Eq{*}{
\infty=\liminf_{x \to 0^+}\frac{N(x,1)}{x^\alpha}\le \liminf_{k \to \infty}\frac{N(x_k,1)}{x_k^\alpha}\le \liminf_{k \to \infty}\frac{M(x_k,1)}{x_k^\alpha} \le \lim_{x \to \infty}\frac{M(x,1)}{x^\alpha} 
}
which, by Lemma~\ref{lem:ord*char}, implies that the inequality $\ord^*(M)\le \alpha$ is also valid. Consequently $\ord^*(M)\le \ord^*(N)$.
    However, by the assumptions, the reverse inequality is also valid. Thus $\ord^*(M)=\ord^*(N)$. Then, since $K$ is $(M,N)$-invariant, we have $\min(M,N)\le K\le \max(M,N)$. Consequently $K \in \GPG$ and $\ord^*(M)=\ord^*(N)=\ord^*(K)$. Then the assertion trivially follows.
\end{proof}
\section{Examples}

We start this section with two simple (although quite artificial) examples which show how to calculate the order. Later we deal with logarithmic mean and the family of Gini means.

 In the first example, we show that not every mean of generalized power growth is of power growth.
\begin{exa}
For a function $e \colon (0,1) \to \R_+$ such that $x \le e(x)\le 1 $ for all $x\in(0,1)$, we define the mean $\mathscr{H}_e \in \HS$ by
 \Eq{M_e}{
 \mathscr{H}_e(x,y):=\max(x,y) e\Big(\frac{\min(x,y)}{\max(x,y)}\Big).
 }

 If $\inf e>0$ then $\mathscr{H}_e$ is of generalized power growth and $\ord^*(\mathscr{H}_e)=0$. 
In the latter case, $\mathscr{H}_e$ is of power growth only if the limit $e(0^+)$ exists.
\end{exa}

\begin{exa}
 We set functions $e_1,e_2 \colon (0,1) \to (0,1)$ by 
 \Eq{*}{
 e_1(t)&:=t^{\frac12+\frac12\sin(\frac1t)} &\qquad t \in(0,1),\\
 e_2(t)&:=\tfrac12 (\sqrt{t}-t)(1+\sin(\tfrac{\pi}{2t}))+t &\qquad t \in(0,1).
 }
Then we define means $\mathscr{H}_{e_1}$ and $\mathscr{H}_{e_2}$ by \eq{M_e}. For every $\alpha \in(0,1)$ we have 
\Eq{*}{
\liminf_{x \to 0^+}\frac{\mathscr{H}_{e_1}(x,1)}{x^\alpha}&=\liminf_{x \to 0^+}\frac{e_1(x)}{x^\alpha}=0;\\ 
\limsup_{x \to 0^+}\frac{\mathscr{H}_{e_1}(x,1)}{x^\alpha}&=\limsup_{x \to 0^+}\frac{e_1(x)}{x^\alpha}=\infty.
}
Therefore $\lo(\mathscr{H}_{e_1})=0$ and $\uo(\mathscr{H}_{e_1})=1$. Similarly $\lo(\mathscr{H}_{e_2})=\tfrac12$ and $\uo(\mathscr{H}_{e_2})=1$.
 \end{exa}

\subsection{Logarithmic mean}
Now we proceed to the logarithmic mean (see, for example, Carlson~\cite{Car72a}). Define the \emph{logarithmic mean} $\calL \colon \R_+^2 \to \R_+$ by 
\Eq{*}{
\calL(x,y)=\frac{x-y}{\log x -\log y}.
}
Obviously $\calL \in \HS$. Moreover we can show the following result
\begin{prop}\label{prop:Logarithmic}
    $\calL$ is not of power growth but is of generalized power growth, and $$\ord^*(\calL)=0.$$
\end{prop}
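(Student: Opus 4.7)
The plan is to first reduce everything to the single–variable function $x\mapsto\calL(x,1)$, which equals $\frac{x-1}{\log x}$, and to read off its behaviour as $x\to 0^+$. Since $x-1\to -1$ and $\log x\to -\infty$, we get $\calL(x,1)\to 0$; more precisely, dividing numerator and denominator by $\log x$ yields
\[
\calL(x,1)=\frac{1-x}{-\log x}=\frac{1-x}{\lvert\log x\rvert},
\]
so $\calL(x,1)\cdot\lvert\log x\rvert\to 1$ as $x\to 0^+$. This logarithmic decay is the key asymptotic that drives everything.

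Next I would show that $\calL\notin\PG$. By the definition, membership in $\PG$ requires some $\alpha\in\R$ for which $\lim_{x\to 0^+}\calL(x,1)/x^\alpha\in(0,\infty)$. Using the substitution $x=e^{-t}$ with $t\to\infty$, the ratio becomes $(1-e^{-t})\cdot e^{\alpha t}/t$, which tends to $0$ whenever $\alpha\le 0$ and to $\infty$ whenever $\alpha>0$. Hence no value of $\alpha$ produces a positive finite limit, so $\calL$ is not of power growth.

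For $\calL\in\GPG$ with $\ord^*(\calL)=0$, I would invoke Lemma~\ref{lem:ord*char} with $C=0$. The same substitution $x=e^{-t}$ gives, for every $\alpha>0$,
\[
\liminf_{x\to 0^+}\frac{\calL(x,1)}{x^\alpha}=\liminf_{t\to\infty}\frac{(1-e^{-t})e^{\alpha t}}{t}=+\infty,
\]
and for every $\alpha<0$,
\[
\limsup_{x\to 0^+}\frac{\calL(x,1)}{x^\alpha}=\limsup_{t\to\infty}\frac{(1-e^{-t})e^{\alpha t}}{t}=0.
\]
Both conditions \eq{E:GPGC} therefore hold with $C=0$, and Lemma~\ref{lem:ord*char} gives $\lo(\calL)=\uo(\calL)=0$, hence $\calL\in\GPG$ and $\ord^*(\calL)=0$.

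There is no real obstacle here: once one writes $\calL(x,1)=(1-x)/\lvert\log x\rvert$, the whole argument reduces to the elementary fact that $t^{-1}e^{\alpha t}$ diverges for $\alpha>0$ and vanishes for $\alpha<0$. The only thing worth being careful about is the negative–exponent case, where one must note that $x^\alpha$ blows up like $e^{\lvert\alpha\rvert t}$ but is still killed by the factor $e^{-\lvert\alpha\rvert t}$ in the substitution; apart from that the computation is routine.
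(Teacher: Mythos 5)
Your proof is correct and takes essentially the same route as the paper: you compute $\lim_{x\to 0^+}\calL(x,1)/x^\alpha$ (via the substitution $x=e^{-t}$ instead of the paper's $y=x^{\alpha}$), obtaining $\infty$ for $\alpha>0$ and $0$ for $\alpha<0$, and then conclude $\lo(\calL)=\uo(\calL)=0$ from the lemmas on orders. A minor bonus is that your case analysis over all $\alpha$ (including $\alpha\le 0$) makes the ``not of power growth'' claim fully explicit, which the paper's proof leaves implicit.
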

\begin{proof}
    For all $\alpha \in (-\infty,0)$ we have
\Eq{*}{
\limsup_{x \to 0^+}\frac{\calL(x,1)}{x^\alpha}=\lim_{x \to 0^+} \frac{x-1}{x^\alpha \log x} =
\lim_{x \to 0^+} \frac{\alpha(x-1)}{x^\alpha \log (x^\alpha)}=-\alpha \lim_{y \to \infty}\frac{1}{y\log y}=0. 
}
Similarly, for all $\alpha \in (0,\infty)$ we have
\Eq{*}{
\liminf_{x \to 0^+}\frac{\calL(x,1)}{x^\alpha}=\lim_{x \to 0^+} \frac{x-1}{x^\alpha \log x} =
\lim_{x \to 0^+} \frac{\alpha(x-1)}{x^\alpha \log (x^\alpha)}=-\alpha \lim_{y \to 0^+}\frac{1}{y\log y}=\infty. 
}
Therefore 
\Eq{*}{
\lo(\calL)&=\sup\Big\{\alpha \in \R \colon \limsup_{x \to 0^+}\frac{\calL(x,1)}{x^\alpha}=0 \Big\}\ge 0; \\
\uo(\calL)&=\inf\Big\{\alpha \in \R \colon \liminf_{x \to 0^+}\frac{M(x,1)}{x^\alpha}=\infty \Big\}\le 0.
}
Thus, by Lemma~\ref{lem:louocomp}, have 
$0\le \lo(\calL)\le \uo(\calL) \le 0$,
which yields $\lo(\calL)= \uo(\calL)= 0$. Finally, we obtain that $\calL$ is of generalized power growth and $\ord^*(\calL)=0$.
\end{proof}

\subsection{Gini means} Finally we proceed to the means which were introduced by Gini \cite{Gin38}. For $p,q \in \R$ define the \emph{Gini mean} $\G_{p,q}\colon \R_+^2 \to \R$ by
\Eq{*}{
\G_{p,q}(x,y):=
\begin{cases}
\left(\dfrac{x^p+y^p}{x^q+y^q}\right)^{\frac1{p-q}} & \textrm{if\ }p \ne q\,,\\
\exp \left(\dfrac{x^p\ln x+y^p\ln y}{x^p+y^p}\right)& \textrm{if\ }p = q\,.
\end{cases}
}
Obviously these means are homogeneous and $\G_{p,q}=\G_{q,p}$ for all $p,q \in \R$. Moreover, this family is a generalization of power means since $\G_{p,0}$ is a $p$-th power mean ($p \in \R$). The classical comparability result by D\'aroczy--Losonczi~\cite{DarLos70} states that 
\Eq{*}{
\min(p,q) \le \min(r,s) \text{ and } \max(p,q) \le \max(r,s) \Rightarrow \G_{p,q} \le \G_{r,s},
}
where $p,q,r,s \in\R$. In the case of bivariate means the full characterization of comparability was obtained by P\'ales~\cite{Pal88c}.

\begin{prop}[\!\!\cite{Pal88c}, Theorem 3]
Let $p, q, r, s \in \R$. Then the following conditions are equivalent:
\begin{itemize}
\item For all $x, y > 0$, $\G_{p,q}(x,y)\le \G_{r,s}(x,y)$;
\item $p + q \le r + s$, $m(p, q) \le m(r , s)$, and $\mu(p, q) \le  \mu(r , s)$, where
\Eq{*}{
m(p,q):=
\begin{cases}
\min(p,q) & \text{ if }p,q\ge 0,\\
0& \text{ if } pq<0,\\
\max(p,q) & \text{ if }p,q\le 0;
\end{cases}
\quad 
\mu(p,q):=
\begin{cases}
\frac{\abs{p}-\abs{q}}{p-q} & \text{ if }p\ne q,\\
\sign(p) & \text{ if }p=q;
\end{cases}
}
%\item $(p,q,r,s)\in \Delta_2$, where
%\Eq{*}{
%\Delta_2:=\{(p,q,r,s)\in \R^4&\colon p + q \le r + s,\ m(p, q) \le m(r , s),\\
%&\qquad \mu(p, q) \le  \mu(r , s)\}.
%}
\end{itemize}
\end{prop}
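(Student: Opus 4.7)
The plan is to prove the two implications separately. For necessity, I would test the purported inequality $\G_{p,q}\le\G_{r,s}$ on specific one-parameter families of arguments, each tuned to expose exactly one of the three scalar conditions. For sufficiency, I would factor the passage from $(p,q)$ to $(r,s)$ into a sequence of elementary deformations, along each of which pointwise monotonicity of the Gini mean is either obvious or follows from the D\'aroczy--Losonczi criterion quoted above.

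For necessity, three asymptotic regimes suffice. Substituting $(x,y)=(1+t,1-t)$ and Taylor-expanding yields
\[
\ln\G_{p,q}(1+t,1-t)=\tfrac{p+q-1}{2}\,t^{2}+O(t^{4}),
\]
so the assumption forces $p+q\le r+s$ immediately. Next, setting $y=1$ and sending $x\to 0^{+}$ (and, symmetrically, $x\to\infty$), I would use the explicit formula $\G_{p,q}(x,1)^{p-q}=(x^{p}+1)/(x^{q}+1)$ to read off both the leading exponent and the first subleading exponent. A case split on the signs of $p$ and $q$ shows that the dominant exponent at $0$ is $0$ if $p,q\ge 0$, is $1$ if $p,q\le 0$, and equals $-q/(p-q)$ if $p>0>q$; in the same-sign case the subleading exponent isolates $\min(p,q)$ or $\max(p,q)$. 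Packaging these exponents into the piecewise functions $m$ and $\mu$ yields the remaining two conditions.

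For sufficiency I would decompose the path in parameter space from $(p,q)$ to $(r,s)$ into segments that separately modify $p+q$, $m$, and $\mu$, each while keeping the other two invariants nondecreasing. Moves within a fixed sign configuration are covered directly by the D\'aroczy--Losonczi criterion, and translations along $\{p-q=\text{const}\}$ that only raise $p+q$ can be handled by differentiating $\ln\G_{p,q}$ with respect to the common shift. The delicate part is the transition across the sign-change locus $\{pq=0\}$, where the three invariants cease to be independent and $\G_{p,q}$ loses smoothness in the parameters; here I would resort to a direct comparison via log-convexity of $t\mapsto x^{t}+y^{t}$ combined with the asymptotic data extracted in the necessity step, or alternatively establish the inequality on a dense set of parameter values with stable sign configurations and then pass to the limit by continuity. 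This sign-change transition is the principal obstacle of the whole argument.
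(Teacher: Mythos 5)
First, a point of orientation: the paper does not prove this proposition at all --- it is quoted verbatim from P\'ales \cite{Pal88c} and used as a black box, so there is no internal argument to compare yours with; your sketch has to stand on its own. The necessity half of your plan is essentially sound: the expansion $\ln\G_{p,q}(1+t,1-t)=\tfrac{p+q-1}{2}t^2+O(t^4)$ is correct and forces $p+q\le r+s$, and the exponents of $\G_{p,q}(x,1)$ as $x\to0^+$ and $x\to\infty$ (namely $\tfrac{1-\mu(p,q)}{2}$ and $\tfrac{1+\mu(p,q)}{2}$, with the subleading correction governed by $m(p,q)$) do yield the other two conditions, modulo a fair amount of case bookkeeping ($p=q$, parameters equal to $0$, mixed signs) that you gloss over.

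The sufficiency half, however, has a genuine gap, and not only at the sign-change locus you flag. Your within-configuration moves are claimed to be ``covered directly by the D\'aroczy--Losonczi criterion'', but that criterion permits exactly the deformations along which both $\min$ and $\max$ of the parameter pair are nondecreasing, and compositions of such moves are again of that type. Take $(p,q)=(5,1)$ and $(r,s)=(4,3)$: all parameters are positive, $p+q=6\le 7=r+s$, $m(p,q)=1\le 3=m(r,s)$, $\mu(p,q)=\mu(r,s)=1$, so the theorem asserts $\G_{5,1}\le\G_{4,3}$; yet every path from $(5,1)$ to $(4,3)$ must at some stage decrease the larger parameter, which D\'aroczy--Losonczi never allows, so your decomposition cannot reach this inequality even inside the positive quadrant. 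The real content of P\'ales's theorem is precisely these ``spread-decreasing, sum-increasing'' comparisons (and their mixed-sign analogues), which require the sum-of-powers inequalities of \cite{Pal88c} or an equivalent Chebyshev/log-convexity argument carried out pointwise; your proposed substitutes for the delicate steps --- log-convexity of $t\mapsto x^t+y^t$ ``combined with the asymptotic data'' and a density-plus-limit passage --- are not arguments, since asymptotic information at $0$ and $\infty$ cannot produce a pointwise inequality, and the dense-set step presupposes the very inequality to be proved. As it stands, the hard direction of the theorem is untouched.
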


We prove that all Gini means are of generalized power growth and establish their order. 

\begin{prop}
    For all $p,q \in \R$ we have $\G_{p,q} \in \GPG$ and 
    \Eq{*}{
    \ord^*(\G_{p,q})=\begin{cases}
        0 & \text{ if }\min(p,q)\ge 0\text{ and }\max(p,q)>0;\\
        1 & \text{ if }\min(p,q)<0\text{ and }\max(p,q)\le 0;\\
        \frac12 & \text{ if }p=q=0;\\
        \frac{-\min(p,q)}{\abs{p-q}} &\text{ if }\min(p,q)< 0 < \max(p,q).
    \end{cases}
    }
\end{prop}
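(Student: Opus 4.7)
The plan is to show that in every case $\G_{p,q}$ is in fact of (ordinary) power growth, so that Corollary~\ref{cor:coincide} immediately identifies $\ord^*(\G_{p,q})$ with the computed $\ord(\G_{p,q})$. The computational engine is the trichotomy: as $x\to 0^+$ one has $x^p+1\to 1$ if $p>0$, $x^p+1=2$ if $p=0$, and $x^p+1\sim x^p$ if $p<0$. Plugging in the corresponding asymptotics for $p$ and $q$ reveals the leading term of $\G_{p,q}(x,1)$, and in every instance the ratio $\G_{p,q}(x,1)/x^{\alpha}$ turns out to converge to a positive constant for the claimed $\alpha$.

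For the off-diagonal case $p\ne q$ I would split according to the sign conditions. If $\min(p,q)\ge 0$ and $\max(p,q)>0$, both $x^p+1$ and $x^q+1$ tend to positive constants, so $\G_{p,q}(x,1)$ does too and the order is $0$. If $\min(p,q)<0$ and $\max(p,q)\le 0$, then (WLOG $p<q\le 0$) the dominant term of $(x^p+1)/(x^q+1)$ is $x^{p-q}$ (multiplied by $1/2$ if $q=0$), so $\G_{p,q}(x,1)\sim C\cdot x$ and the order is $1$. In the mixed sub-case $q<0<p$ (also taken WLOG), $(x^p+1)/(x^q+1)\sim x^{-q}$, hence $\G_{p,q}(x,1)\sim x^{-q/(p-q)}$, and one checks $-q/(p-q)=-\min(p,q)/|p-q|$ in agreement with the statement.

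The diagonal $p=q$ requires the exponential formula and is treated separately. For $p=q=0$ a direct calculation gives $\G_{0,0}(x,1)=\exp(\tfrac12\ln x)=x^{1/2}$ and order $1/2$. For $p=q>0$ one uses $x^p\ln x\to 0$ to conclude $\G_{p,p}(x,1)\to 1$ and order $0$. The main obstacle is $p=q<0$, where I would rewrite the exponent as $\ln x-\ln x/(x^p+1)$, use $1/(x^p+1)\sim x^{-p}$ with $-p>0$, and then invoke the standard fact that a positive power beats a logarithm to obtain $-x^{-p}\ln x\to 0$. This gives $\G_{p,p}(x,1)/x\to 1$, and hence order $1$. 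This $0\cdot\infty$ limit is the only nonroutine step; once it is handled, Corollary~\ref{cor:coincide} closes out every sub-case and produces the formula for $\ord^*(\G_{p,q})$.
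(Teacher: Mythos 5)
Your proposal is correct, and in the mixed-sign case $q<0<p$ it coincides with the paper's computation; but elsewhere it takes a genuinely different route. You compute asymptotics directly in every case (including the diagonal cases $p=q\ne 0$ via the exponential formula, where the key step is indeed the elementary limit $x^{-p}\ln x\to 0$), conclude that every Gini mean is actually of ordinary power growth, and then invoke Corollary~\ref{cor:coincide}. The paper instead never touches the exponential formula except at $p=q=0$: for the one-signed cases it sandwiches $\G_{p,q}$ against the power means $\G_{0,p}$ resp.\ $\G_{0,q}$ using the D\'aroczy--Losonczi comparison theorem, obtaining $\lim_{x\to 0^+}\G_{p,q}(x,1)\ge 2^{-1/p}>0$ when $q\ge 0<p$, and $\lim_{x\to 0^+}\tfrac1x\G_{p,q}(x,1)\le 2^{-1/q}$ when $q<0\ge p$, and then applies Lemma~\ref{lem:ord*char} directly to identify $\ord^*$ as $0$ resp.\ $1$ without computing any limit of $\G_{p,p}(x,1)$ itself. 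Your approach buys a slightly stronger conclusion (all Gini means lie in $\PG$, with explicit constants $C_M$), at the cost of a case-by-case asymptotic analysis including the $0\cdot\infty$ limit on the diagonal; the paper's approach buys brevity in the one-signed cases, since the comparison argument handles $p=q>0$ and $p=q<0$ together with the off-diagonal cases of the same sign pattern, but it only certifies generalized power growth there. Both arguments are complete and yield the stated formula.
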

\begin{proof}
Keeping in mind the equality $\G_{p,q}=\G_{q,p}$, let us assume without loss of generality that $p \ge q$. Now we split our proof into four cases.

\case{1} If $q<0<p$ then we have
    \Eq{*}{
\lim_{x \to 0^+} \frac{\G_{p,q}(x,1)}{x^\alpha}=\lim_{x \to 0^+} 
x^{-\alpha} \bigg(\frac{x^p+1}{(x^{-q}+1)x^q}\bigg)^{1/(p-q)} =
\lim_{x \to 0^+} 
x^{-\alpha-\frac{q}{p-q}} \bigg(\frac{x^p+1}{x^{-q}+1}\bigg)^{1/(p-q)}
}
Since $\lim\limits_{x\to 0^+} \frac{x^p+1}{x^{-q}+1}=1$ we get that $\ord^*(\G_{p,q})$ is the solution $\alpha$ of the equation $-\alpha-\frac{q}{p-q}$, that is $\alpha=\frac{-q}{p-q}=\frac{-\min(p,q)}{\abs{p-q}}$. 

\case{2} If $q \ge 0$ and $p>0$ then 
\Eq{*}{
\lim_{x \to 0^+} \G_{p,q}(x,1)\ge \lim_{x \to 0^+} \G_{0,p}(x,1)=2^{-1/p}>0
}
which (by Lemma~\ref{lem:ord*char}) implies $\ord^*(M)=0$.

\case{3} If $q<0$ and $p \le 0$ then 
\Eq{*}{
\lim_{x \to 0^+} \frac{1}{x}\G_{p,q}(x,1)\le \lim_{x \to 0^+} \frac{1}{x}\G_{0,q}(x,1) =\lim_{x \to 0^+}\G_{0,q}\Big(1,\frac1x\Big)=\lim_{x \to 0^+}\bigg(\frac{1+x^{-q}}2\bigg)^{1/q}=2^{-1/q},
}
which (by Lemma~\ref{lem:ord*char}) implies $\ord^*(M)=1$.

\case{4} At the very end we consider the isolated case $p=q=0$. Then $\G_{p,q}(x,1)=\sqrt{x}$. Consequently \eq{E:GPGC} holds with $C=\tfrac12$. Thus, by Lemma~\ref{lem:ord*char}, we obtain $\ord^*(\G_{0,0})=\tfrac12$. 
\end{proof}

\end{document}